\newtheorem{thm}{Theorem}[section]
\newtheorem{lemma}[thm]{Lemma}
\theoremstyle{definition}
\newtheorem{definition}[thm]{Definition}
\theoremstyle{remark}
\numberwithin{equation}{section}
\newcommand*{\diff}{\mathop{}\mathrm{d}}
\newcommand{\eqnum}{\refstepcounter{equation}\textup{\tagform@{\theequation}}}
\begin{document}

\title[]
{Martingale solution to stochastic extended Korteweg - de Vries equation}

\author[Karczewska]{Anna Karczewska}
\address{Faculty of Mathematics, Computer Science and Econometrics, University of Zielona G\'ora, Szafrana 4a, 65-516 Zielona G\'ora, Poland} \email{A.Karczewska@wmie.uz.zgora.pl} \thanks{}

\author[Szczeci\'nski]{Maciej Szczeci\'nski} \address{ Faculty of Mathematics, Computer Science and Econometrics, University of Zielona G\'ora, Szafrana 4a, 65-516 Zielona G\'ora, Poland} \email{mszczecinski@wmie.uz.zgora.pl} 

\date{\today}

\subjclass[2010]{35Q53; 60H15; 76D33}

\keywords{Extended Korteweg - de Vries equation, martingale solution, stochastic fluid dynamics}


\begin{abstract}
We study a stochastic extended Korteweg - de Vries equation driven by a multiplicative noise. We prove the existence of a martingale solution to the equation studied. The proof of the solution is based on two approximations of the problem considered and the compactness method.
 \end{abstract}

\maketitle

\section{Introduction} \label{s1}

The celebrated Korteweg - de Vries equation (\emph{KdV} for short) \cite{kdv}, derived from the set of Eulerian shallow water and long wavelength equations, become a paradigm in the field of nonlinear partial differential equations. 
 KdV appears as the lowest approximations of wave motion in several fields of physics, see, e.g., monographs \cite{DrJ,EIGR,Newell,Rem,Whit} and references therein.

 KdV is, however, the result of an approximation of the set of the Euler equations  within perturbation approach limited to the first order in expansion with respect to parameters assumed to be small. 
Several authors have extended KdV to the second order (the \emph{extended KdV} or \emph{KdV2}),  e.g. \cite{MS90,MS96,BS,KRI,KRR,KRI2,KRIR,Yang},
which is a more exact approximation of the Euler equations but far more difficult since it contains higher nonlinear term and higher derivatives.

Despite its non-integrability, KdV2 has exact analytic solutions both solitonic \cite{KRI} and periodic \cite{RK,RKI}. 
These solutions have the same form as corresponding solutions to KdV but with slightly different coefficients.

A natural continuation of the study of the extended KdV equation seems to be considering stochastic versions of such equation. KdV2 equation driven by random noise can be a model of several kinds of waves (e.g., surface water waves, waves in plasma) influenced by random factors. Two cases of the stochastic KdV2 equation are possible - the case with additive noise and the case with the multiplicative noise. The additive case we  studied in \cite{KaSz1}, where a mild solution to KdV2 has been established.

In this paper, we  consider the stochastic extended Korteweg - de Vries equation with  multiplicative random noise. We prove the existence of  martingale solution to stochastic KdV2 equation driven by cylindrical Wiener process. In the proof, we generalize the methods used in papers  \cite{Deb} and \cite{Gat}. We have to emphasize that the method used in \cite{Deb} for estimations was not suitable in our case. We adapted for our purposes (proof of Lemma~2.4) the approach used by Flandoli and Gątarek in \cite{Gat}. 

\section{Existence of martingale solution}

We consider initial value problem for Korteweg - de Vries type equation
\begin{equation}\label{Apr}
\begin{cases}
\diff u(t,x) + \big[ u_{3x}(t,x) + u(t,x)u_{x}(t,x) + u(t,x)u_{3x}(t,x)  \\
 \hspace{11ex} +\, 3 u_{x}(t,x)u_{2x}(t,x) \big] \diff t = \Phi\left(u(t,x)\right) \diff W(t), \\
u(0,x) = u_{0}(x).
\end{cases}
\end{equation}
In (\ref{Apr}), 
$W(t)$, $t\geq 0$, is a cylindrical Wiener process adapted to the filtration 
 $\left\{\mathscr{F}_{t}\right\}_{t\geq 0}$, \linebreak
$u_{0} \in L^{2}(\mathbb{R}) $ is a deterministic function, $u(\omega,\cdot,\cdot):\mathbb{R}_{+}\times\mathbb{R} \rightarrow \mathbb{R}$ for all $\omega\in\Omega$. Moreover, we assume that
 $|u(t,x)|+|u|_{L^{2}(\mathbb{R})}<\lambda<\infty, ~\lambda>0, \mbox{~for~all~} t\in\mathbb{R}_{+} \mbox{~and~} x\in\mathbb{R}$, what reflects finitness of solutions to deterministic version of the equation (\ref{Apr}) (see, e.g., \cite{KRI,RK,RKI}).
The operator $\Phi$  is a continuous mapping from
$H^{2}(\mathbb{R})$ to $L_{2}^{0}(L^{2}(\mathbb{R}))$, the space of Hilbert-Schmidt operators from $L^{2}(\mathbb{R})$ to itself. 
The operator  $\Phi$  is such that for any  $u\in H^{2}(\mathbb{R})$ 
the following conditions hold:
\begin{equation}\label{W1}
\displaystyle\mathop{\exists}_{\kappa_{1}, \kappa_{2}>0} \quad \left\|\Phi(u(x))\right\|_{L_{0}^{2}(L^{2}(\mathbb{R}))} \leq \kappa_{1} \max \left\{\left|u(x)\right|^{2}_{L^{2}(\mathbb{R})}, \left|u(x)\right|_{L^{2}(\mathbb{R})}\right\} + \kappa_{2};
\end{equation}
\begin{align}\label{W3}
&\mbox{there~exist~such~functions}~  a,b\in L^{2}(\mathbb{R})~ \mbox{with~compact~support,~that~the~mapping} \\ 
&u \mapsto \left( \Phi(u)a , \Phi(u)b \right)_{L^{2}(\mathbb{R})}~ \mbox{is~continuous~in~topology}~ L^{2}_{loc}(\mathbb{R}).\nonumber
\end{align}

\begin{definition} We say that the problem  (\ref{Apr}) has a {\tt  martingale solution} on the interval  $[0,T]$, $T>0$, if there exists a stochastic basis  $(\Omega,\mathscr{F},\left\{\mathscr{F}_{t}\right\}_{t\geq 0}, \mathbb{P}, \left\{W_{t}\right\}_{t\geq 0} )$, where $\left\{W_{t}\right\}_{t\geq 0}$ is a cylindrical Wiener process, and there exists the process  $\left\{u(t,x)\right\}_{t\geq 0}$ adapted to the filtration  $\left\{\mathscr{F}_{t}\right\}_{t\geq 0}$ with trajectories belonging to the space 
\begin{equation}\nonumber
L^{\infty}(0,T;L^{2}(\mathbb{R}))\cap L^{2}(0,T;L^{2}_{loc}(\mathbb{R})\cap \mathscr{C}(0,T;H^{s}_{loc}(\mathbb{R}), \quad s<0, \quad \mathbb{P} - \text{a.s.}  
\end{equation}
such that 
\begin{equation}\nonumber
\begin{aligned}
& \left\langle u(t,x), v(x) \right\rangle + \int_{0}^{t} \left\langle u_{3x}(s,x) + u(s,x)u_{x}(s,x) + u(s,x)u_{3x}(s,x) \right. \\
& \left. + 3 u_{x}(s,x)u_{2x}(s,x), v(x) \right\rangle \diff s = \left\langle u_{0}(x), v(x)\right\rangle + \left\langle \int_{0}^{t} \Phi(u(s,x)) \diff W(s), v(x) \right\rangle
\end{aligned}
\end{equation}
for any $t\in[0,T]$ and $v\in H^{1}_{loc}(\mathbb{R})$.
\end{definition}

Now, we can to formulate the main result of the paper.

\begin{thm}\label{P4.1}
For all  $u_{0}\in L^{2}(\mathbb{R})$  and $T>0$ there exists a martingale solution to (\ref{Apr}) with conditions (\ref{W1}) and (\ref{W3}).
\end{thm}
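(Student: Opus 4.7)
\emph{Proof plan.} The approach combines a two-parameter regularization with the stochastic compactness/Skorokhod scheme, in the spirit of Debussche and Flandoli-G\k{a}tarek. First I would replace (\ref{Apr}) by
$$\diff u^{\epsilon,R} + \Bigl[\epsilon u^{\epsilon,R}_{4x} + u^{\epsilon,R}_{3x} + \chi_R\!\bigl(|u^{\epsilon,R}|_{L^2}\bigr)\bigl(u^{\epsilon,R}u^{\epsilon,R}_{x} + u^{\epsilon,R}u^{\epsilon,R}_{3x} + 3 u^{\epsilon,R}_{x} u^{\epsilon,R}_{2x}\bigr)\Bigr]\diff t = \Phi(u^{\epsilon,R})\diff W,$$
with a smooth cutoff $\chi_R$ of the nonlinearity and parabolic regularization $\epsilon u_{4x}$. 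For each $(\epsilon,R)$ a Galerkin/fixed-point argument produces a unique global strong solution adapted to $\{\mathscr{F}_t\}$.

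The second step is a uniform a priori estimate. Applying It\^o's formula to $\|u^{\epsilon,R}(t)\|_{L^2}^{2}$, I observe that $u_{3x} = \partial_x u_{2x}$, $u u_x = \tfrac12\partial_x(u^2)$, and a short calculation gives the divergence-form identity $u u_{3x} + 3 u_x u_{2x} = \partial_x(u u_{2x} + u_x^2)$. Hence all the nonlinear drift terms vanish when tested against $u$ itself, leaving only the dissipation $-2\epsilon\int_0^t \|u^{\epsilon,R}_{2x}(s)\|_{L^2}^{2}\diff s$ and the noise covariance, which is controlled by assumption (\ref{W1}). Burkholder-Davis-Gundy combined with Gronwall's lemma then yields, uniformly in $(\epsilon,R)$,
$$\mathbb{E}\sup_{t \le T}\|u^{\epsilon,R}(t)\|_{L^2}^{2p} + \epsilon\,\mathbb{E}\int_0^T \|u^{\epsilon,R}_{2x}(s)\|_{L^2}^{2}\diff s \le C\bigl(p,T,|u_0|_{L^2},\lambda\bigr).$$

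Next I would prove tightness; this is essentially the content of Lemma~2.4 announced in the introduction. Decomposing $u^{\epsilon,R}(t)-u^{\epsilon,R}(s)$ into its drift and martingale parts and measuring the increment in $H^{-3}_{loc}(\mathbb{R})$, the three spatial derivatives of the drift are absorbed into the dual pairing, while the stochastic integral is controlled via (\ref{W1}) and BDG; this produces a fractional H\"older bound in time. Together with the uniform $L^{\infty}(0,T;L^{2})$ estimate and the compact embedding $L^{2}\hookrightarrow H^{s}_{loc}$ for $s<0$, an Aldous-type criterion shows that the family $\{\mathcal{L}(u^{\epsilon,R})\}$ is tight in $L^{2}(0,T;L^{2}_{loc})\cap \mathscr{C}([0,T];H^{s}_{loc})$. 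Skorokhod's representation theorem then supplies, on a new stochastic basis, almost surely convergent versions $\tilde u^{\epsilon,R}\to \tilde u$ together with Wiener processes $\tilde W^{\epsilon,R}\to \tilde W$; I would send $R\to\infty$ first (the cutoff eventually disappearing because of the a priori bound $|u|_{L^{2}}<\lambda$ built into the problem) and then $\epsilon\to 0$. Strong $L^{2}_{loc}$ convergence of $\tilde u$, combined with the divergence-form structure above, lets me pass to the limit in every nonlinear term when tested against $v\in H^{1}_{loc}$, and assumption (\ref{W3}) identifies the limit of the stochastic integrals with $\int_0^t \Phi(\tilde u)\diff \tilde W$ via the standard Bensoussan criterion.

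The hard part is the tightness estimate. The extended nonlinearity $u u_{3x}+3 u_x u_{2x}$ contains a product of $u$ with a third derivative, which the bare $L^{2}$ energy identity does not by itself control. Its rewriting in divergence form reduces the weak formulation to strong $L^{2}_{loc}$ convergence of $u$ and of $u_x$, but this convergence has to survive the simultaneous passage $\epsilon\to 0$: the parabolic term gives the $H^{2}$ dissipation that furnishes compactness at finite $\epsilon$, and the delicacy is that a Debussche-style estimate of the increments loses uniformity in $\epsilon$ here. This is precisely why the Flandoli-G\k{a}tarek method, which measures time increments in a sufficiently negative Sobolev norm and interpolates with the energy bound, is the tool the authors flag as indispensable.
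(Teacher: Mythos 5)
Your overall architecture (parabolic regularization, Galerkin approximation with cutoffs, uniform estimates, tightness in $L^{2}(0,T;L^{2}_{loc})\cap\mathscr{C}(0,T;H^{-3}_{loc})$, Prohorov--Skorokhod, and martingale identification \`a la Flandoli--G\k{a}tarek) matches the paper's. But the central a priori estimate contains a genuine error. Your divergence identity $u u_{3x}+3u_{x}u_{2x}=\partial_x\left(uu_{2x}+u_x^2\right)$ is correct, yet it does \emph{not} imply that this term vanishes when tested against $u$: integrating by parts,
\begin{equation*}
\int_{\mathbb{R}} u\,\partial_x\!\left(uu_{2x}+u_x^2\right)\diff x=-\int_{\mathbb{R}}u_x\left(uu_{2x}+u_x^2\right)\diff x=\tfrac{1}{2}\int_{\mathbb{R}}u_x^3\diff x-\int_{\mathbb{R}}u_x^3\diff x=-\tfrac{1}{2}\int_{\mathbb{R}}u_x^3\diff x,
\end{equation*}
which is not zero. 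The cancellation you invoke does hold for $uu_x=\tfrac12\partial_x(u^2)$, because there one integrates the exact derivative $\tfrac13(u^3)_x$; that is a special feature not shared by the second-order nonlinearity. Hence the plain $L^{2}$ energy identity does not close: the surviving cubic term $\int u_x^3$ cannot be absorbed by $\|u\|_{L^2}^2$ and the $\varepsilon$-dissipation without losing uniformity in $\varepsilon$. This is precisely the obstruction the authors flag when they say the de Bouard--Debussche estimates were not suitable here.

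A second, related gap is spatial compactness. Your scheme yields only $\mathbb{E}\sup_{t}\|u\|_{L^2}^{2p}\le C$ and $\varepsilon\,\mathbb{E}\int_0^T\|u_{2x}\|_{L^2}^2\diff s\le C$; the latter degenerates as $\varepsilon\to0$, so you have no $\varepsilon$-uniform bound in any positive-order Sobolev space, and the strong $L^{2}_{loc}$ convergence needed to pass to the limit in $uu_x$ and $uu_{3x}+3u_xu_{2x}$ is not available. The paper instead applies It\^o's formula to the weighted functional $F(u)=\int p(x)u^2\diff x$ with an increasing bounded weight $p$ (Kato's local smoothing device): the term $\int p\,u\,u_{3x}$ produces the good contribution $\tfrac32\int p'(x)u_x^2$, and the bad contributions of $uu_{3x}+3u_xu_{2x}$ are absorbed using the standing pointwise bound $|u(t,x)|+|u|_{L^{2}(\mathbb{R})}<\lambda$ together with the condition $(\lambda-2)\delta_2\ge\delta_3$ imposed on the weight. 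This is what yields the $\varepsilon$-uniform estimate $\mathbb{E}\left(|u^{\varepsilon}|^2_{L^2(0,T;H^1(-k,k))}\right)\le\tilde C_2(k)$ of Lemma~\ref{szac4.1}, without which neither the tightness proof nor the identification of the limit goes through. You would need to incorporate this weighted estimate, or an equivalent local smoothing argument, to repair the proof.
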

\begin{proof}

Let $\varepsilon>0$. Consider

\begin{equation}\label{par}
\begin{cases}
\diff u^{\varepsilon}(t,x) + \big[ \varepsilon u^{\varepsilon}_{4x}(t,x) + u^{\varepsilon}_{3x}(t,x) +  u^{\varepsilon}(t,x)u^{\varepsilon}_{x}(t,x) + 3u_{x}^{\varepsilon}(t,x)u^{\varepsilon}_{2x}(t,x) \\ 
\hspace{12ex} + u^{\varepsilon}(t,x)u^{\varepsilon}_{3x}(t,x) \big] \diff t 
 =  \Phi\left(u^{\varepsilon}(t,x)\right)\diff W(t) \\
u^{\varepsilon}_{0}(x) = u^{\varepsilon}(0,x) .
\end{cases}
\end{equation}

\begin{lemma}\label{parMart}
For any $\varepsilon>0$ there exists a martingale solution to the problem  (\ref{par}) with conditions (\ref{W1}) and (\ref{W3}).
\end{lemma}

\pagebreak

\begin{lemma}\label{szac4.1}
There exists $\varepsilon_{0} > 0$, such that 
\begin{eqnarray}
\label{4.1a}
\exists_{C_{1}>0}\forall_{0 < \varepsilon < \varepsilon_{0}}  \varepsilon\mathbb{E}\left( \left|u^{\varepsilon}(t,x)\right|^{2}_{L^{2}(0,T;H^{2}(\mathbb{R}))} \right)  & \leq \tilde{C}_{1},  \\ 
\label{4.1c} 
\forall_{k\in X_{k}}\exists_{C_{2}(k)>0}\forall_{0 < \varepsilon < \varepsilon_{0}} \mathbb{E}\left( \left|u^{\varepsilon}(t,x)\right|^{2}_{L^{2}(0,T;H^{1}(-k,k))} \right)  & \leq \tilde{C}_{2}(k), 
\end{eqnarray}
where $X_{k} = \big\{k>0: \left|k\right| \leq \min\left\{-x_{1}, x_{2}\right\}\big\}$.
\end{lemma}

\begin{lemma}\label{PropCias}
The family of distributions  $\mathscr{L}(u^{\varepsilon})$ is tight in $L^{2}(0,T;L^{2}_{loc})\cap\mathscr{C}(0,T;H^{-3}_{loc}(\mathbb{R}))$.
\end{lemma}

Proofs of Lemmas \ref{parMart}, \ref{szac4.1} and  \ref{PropCias} are given in sections \ref{sec3} and \ref{sec4}.

Substitute in Prohorov's theorem (e.g., see Theorem~5.1 in \cite{Bil})
  $S:=L^{2}(0,T;L^{2}_{loc}(\mathbb{R}))\cap\mathscr{C}(0,T;H^{-3}_{loc}(\mathbb{R}))$ and $\mathscr{K}:=\left\{\mathscr{L}(u^{\varepsilon})\right\}_{\varepsilon>0}$. 
Since $\mathscr{K}\subset\mathscr{P}(S)$ is tight in  $S$, 
then it is  sequentially compact, so there exists a subsequence of $\left\{\mathscr{L}(u^{\varepsilon})\right\}_{\varepsilon>0}$ converging to some measure $\mu$ in~$\bar{\mathscr{K}}$. 

Because $\left\{\mathscr{L}(u^{\varepsilon})\right\}_{\varepsilon>0}$ is convergent, then it is also weakly convergent. Therefore in Skorohod's theorem (e.g., see Theorem~6.7 in \cite{Bil})
 one can substitute $\mu_{\varepsilon}:=\left\{\mathscr{L}(u^{\varepsilon})\right\}_{\varepsilon>0}$, \linebreak $\mu:=\lim_{\varepsilon\to 0} \mu_{\varepsilon}$.
Then there exists a space
 $(\bar{\Omega}, \bar{\mathscr{F}}, \left\{\bar{\mathscr{F}}_{t}\right\}_{t\geq 0}, \bar{\mathbb{P}})$ and random variables $\bar{u}^{\varepsilon}$, $\bar{u}$ with values in $L^{2}(0,T;L^{2}_{loc})\cap \mathscr{C}(0,T;H^{-3}_{loc}(\mathbb{R}))$ such that $\bar{u}^{\varepsilon}\rightarrow\bar{u}$ in $L^{2}(0,T;L^{2}_{loc})$ and in $\mathscr{C}(0,T;H^{-3}_{loc}(\mathbb{R}))$. Moreover  $\mathscr{L}(\bar{u}^{\varepsilon}) = \mathscr{L}(u^{\varepsilon})$. 

Then due to Lemma  \ref{szac4.1}, for any $p\in\mathbb{N}$ there exist constants $\tilde{C}_{1}(p)$, $\tilde{C}_{2}$ such that
\begin{equation}\nonumber
\begin{aligned}
\mathbb{E}(\sup_{t\in[0,T]} \left|\bar{u}^{\varepsilon}(t,x)\right|_{L^{2}(\mathbb{R})}^{2p}) \leq \tilde{C}_{1}(p) \quad \mbox{and} \quad
\mathbb{E}(\left|\bar{u}^{\varepsilon}(t,x)\right|^{2}_{L^{2}(0,T;H^{2}(\mathbb{R}))}) \leq \tilde{C}_{2}.
\end{aligned}
\end{equation}
Additionally, 
\begin{equation}\nonumber
\bar{u}^{\varepsilon}(t,x)\in L^{2}(0,T; H^{1}(-k,k))\cap L^{\infty}(0,T; L^{2}(\mathbb{R})), \quad \mathbb{P} - \text{a.s.} 
\end{equation}
Then one can conclude that
 $\bar{u}^{\varepsilon}\rightarrow \bar{u}$ weakly in $L^{2}(\bar{\Omega},L^{2}(0,T;H^{1}(-k,k)))$. 

Let $x\in\mathbb{R}$ be fixed and denote
\begin{equation}\nonumber
\begin{aligned}
M^{ \varepsilon}(t) := & u^{ \varepsilon}(t,x) - u_{0}^{  \varepsilon}(x) + \int_{0}^{t}\bigg[ \varepsilon u^{\varepsilon}(t,x)_{4x}(t,x) + u^{ \varepsilon}(t,x)u^{ \varepsilon}_{x}(t,x)\\
& + u^{ \varepsilon}_{3x}(t,x) + 3u^{ \varepsilon}_{x}(t,x) u^{ \varepsilon}_{2x}(t,x) + u^{ \varepsilon}(t,x) u^{ \varepsilon}_{3x}(t,x) \bigg] \diff s,  \\
\bar{M}^{ \varepsilon}(t) := & \bar{u}^{ \varepsilon}(t,x) - \bar{u}_{0}^{  \varepsilon}(x) + \int_{0}^{t}\bigg[ \varepsilon \bar{u}^{\varepsilon}(t,x)_{4x}(t,x) + \bar{u}^{ \varepsilon}(t,x)\bar{u}^{ \varepsilon}_{x}(t,x)\\
& + \bar{u}^{ \varepsilon}_{3x}(t,x) + 3\bar{u}^{ \varepsilon}_{x}(t,x) \bar{u}^{ \varepsilon}_{2x}(t,x) + \bar{u}^{ \varepsilon}(t,x) \bar{u}^{ \varepsilon}_{3x}(t,x) \bigg] \diff s.  
\end{aligned}
\end{equation}

Note, that 
\begin{equation}\nonumber
\begin{aligned}
M^{ \varepsilon}(t) = & u_{0}^{  \varepsilon}(x) - \int_{0}^{t}\bigg[ \varepsilon u^{\varepsilon}(t,x)_{4x}(t,x) + u^{ \varepsilon}(t,x)u^{ \varepsilon}_{x}(t,x) + u^{ \varepsilon}_{3x}(t,x) + 3u^{ \varepsilon}_{x}(t,x) u^{ \varepsilon}_{2x}(t,x)\\
& + u^{ \varepsilon}(t,x) u^{ \varepsilon}_{3x}(t,x) \bigg] \diff s + \int_{0}^{t} \left( \Phi \left( u^{  \varepsilon} (s,x) \right) \right) \diff W^{  \varepsilon} (s) - u_{0}^{  \varepsilon}(x) +  \int_{0}^{t}\bigg[ \varepsilon u^{\varepsilon}(t,x)_{4x}(t,x) \\
& + u^{ \varepsilon}(t,x)u^{ \varepsilon}_{x}(t,x) + u^{ \varepsilon}_{3x}(t,x) + 3u^{ \varepsilon}_{x}(t,x) u^{ \varepsilon}_{2x}(t,x) + u^{ \varepsilon}(t,x) u^{ \varepsilon}_{3x}(t,x) \bigg] \diff s  \\
= & \int_{0}^{t} \left( \Phi \left( u^{  \varepsilon} (s,x) \right) \right) \diff W (s), \\
\end{aligned}
\end{equation}
so, $M^{ \varepsilon}(t)$, $t\geq 0$, is a square integrable martingale with values in  $L^{2}(\mathbb{R})$, adapted to filtration $\sigma\left\{u^{  \varepsilon}(s,x), 0\leq s \leq t\right\}$ with quadratic variation 
\begin{equation}\nonumber
\left[M^{ \varepsilon}\right](t) = \int_{0}^{t}\Phi(u^{  \varepsilon}(s,x))\left[\Phi(u^{  \varepsilon}(s,x))\right]^{*} \diff s .
\end{equation}

Substitute in the Doob inequality (e.g., see Theorem 2.2 in \cite{Gaw})
 $M_{t}:=M^{  \varepsilon}(t)$ and $p:=2p$. Then
\begin{equation}\label{szacDoob}
\mathbb{E}\left[\left( \sup_{t\in[0,T]} \left|M^{  \varepsilon}(t)\right|_{L^{2}(\mathbb{R})}^{p} \right)\right] \leq \left( \frac{p}{p-1} \right)^{p} \mathbb{E} \left(\left|M^{  \varepsilon}(T)\right|_{L^{2}(\mathbb{R})} \right).
\end{equation}

Assume $0\leq s \leq t \leq T$ and let $\varphi$ be a bounded continuous function on $L^{2}(0,s;L^{2}_{loc}(\mathbb{R}))$ or $C(0,s;H^{-3}_{loc}(\mathbb{R}))$. Let $a,b\in H^{3}_{0}(-k,k)$, $k\in\mathbb{N}$, be arbitrary and fixed. Since $M^{ \varepsilon}(t)$ is a martingale and $\mathscr{L}(\bar{u}^{ \varepsilon}) = \mathscr{L}(u^{ \varepsilon})$, then (see \cite{Gat}, p. 377-378)
\begin{equation}\nonumber
\begin{aligned}
\mathbb{E} & \Big( \left\langle M^{ \varepsilon}(t) - M^{ \varepsilon}(s); a \right\rangle \varphi\left(u^{ \varepsilon}(t,x)\right)\Big) = 0 , \\
\mathbb{E} & \Big( \left\langle \bar{M}^{ \varepsilon}(t) - \bar{M}^{ \varepsilon}(s); a \right\rangle\varphi\left(\bar{u}^{ \varepsilon}(t,x)\right)\Big) = 0 \\
\end{aligned}
\end{equation}
and
\begin{equation}\nonumber
\begin{aligned}
\mathbb{E} & \bigg\{\bigg[\left\langle  M^{ \varepsilon}(t);a \right\rangle \left\langle M^{ \varepsilon}(t);b \right\rangle - \left\langle M^{ \varepsilon}(s);a \right\rangle \left\langle M^{ \varepsilon}(s);b \right\rangle \\
& - \int_{s}^{t} \left\langle \left[\Phi\left(u^{ \varepsilon}(\xi,x)\right)\right]^{*}a ; \left[\Phi\left(u^{ \varepsilon}(\xi,x)\right)\right]^{*}b \right\rangle \diff \xi\bigg]\varphi(u^{ \varepsilon}(t,x))\bigg\} = 0, \\
\mathbb{E} & \bigg\{\bigg[\left\langle  \bar{M}^{ \varepsilon}(t);a \right\rangle \left\langle \bar{M}^{ \varepsilon}(t);b \right\rangle - \left\langle \bar{M}^{ \varepsilon}(s);a \right\rangle \left\langle \bar{M}^{ \varepsilon}(s);b \right\rangle \\
& - \int_{s}^{t} \left\langle \left[\Phi\left(\bar{u}^{ \varepsilon}(\xi,x)\right)\right]^{*}a ; \left[\Phi\left(\bar{u}^{ \varepsilon}(\xi,x)\right)\right]^{*}b \right\rangle \diff \xi\bigg]\varphi(\bar{u}^{ \varepsilon}(t,x))\bigg\} = 0 .
\end{aligned}
\end{equation}

Denote $$\bar{M}(t) := \bar{u}(t,x) - \bar{u}_{0}^{  \varepsilon}(x) + \int_{0}^{t}\bigg[ \bar{u}(t,x)\bar{u}_{x}(t,x)
 + \bar{u}_{3x}(t,x) + 3\bar{u}_{x}(t,x) \bar{u}_{2x}(t,x) + \bar{u}(t,x) \bar{u}_{3x}(t,x) \bigg] \diff s.$$ 
If $\varepsilon \rightarrow 0 $, to
$ \bar{M}^{ \varepsilon}(t) \rightarrow \bar{M}(t)$ and $\bar{M}^{  \varepsilon}(s) \rightarrow \bar{M}(s)$, $\bar{\mathbb{P}} - \text{a.s.}$ in $H^{-3}_{loc}(\mathbb{R})$. Moreover, since $\varphi$ is continuous, then $\varphi(\bar{u}^{  \varepsilon}(s,x)) \rightarrow \varphi(\bar{u}(s,x))$, $\bar{\mathbb{P}} - \text{a.s.}$. Therfeore, if $\varepsilon \rightarrow 0$, then
\begin{equation}\nonumber
\begin{aligned}
\mathbb{E}&  \Big( \left\langle \bar{M}^{  \varepsilon}(t) - \bar{M}^{  \varepsilon}(s); a \right\rangle \varphi(\bar{u}^{  \varepsilon}(t,x))\Big)  \rightarrow \mathbb{E} \Big( \left\langle \bar{M}(t) - \bar{M}(s); a \right\rangle \varphi(\bar{u}(t,x))\Big) .
\end{aligned}
\end{equation}

Additionaly, because $\Phi$ is a continuous operator in topology $L_{loc}^{2}(\mathbb{R})$ and (\ref{szacDoob}) holds, therefore if  $\varepsilon \rightarrow 0$, then
\begin{equation}\nonumber
\begin{aligned}
\left\langle \left(\Phi(\bar{u}^{ \varepsilon}(s,x))\right)^{*}a; \left(\Phi(\bar{u}^{ \varepsilon}(s,x))\right)^{*}b\right\rangle \rightarrow \left\langle \left(\Phi(\bar{u}(s,x))\right)^{*}a; \left(\Phi(\bar{u}(s,x))\right)^{*}b \right\rangle
\end{aligned}
\end{equation}
and
\begin{equation}\nonumber
\begin{aligned}
\mathbb{E} & \bigg\{\bigg[\left\langle  \bar{M}^{ \varepsilon}(t);a \right\rangle \left\langle \bar{M}^{ \varepsilon}(t);b \right\rangle - \left\langle \bar{M}^{ \varepsilon}(s);a \right\rangle \left\langle \bar{M}^{ \varepsilon}(s);b \right\rangle \\
& - \int_{s}^{t} \left\langle \left[\Phi\left(\bar{u}^{ \varepsilon}(s,\xi)\right)\right]^{*}a ; \left[\Phi\left(\bar{u}^{ \varepsilon}(s,\xi)\right)\right]^{*}b \right\rangle \diff \xi\bigg]\varphi(\bar{u}^{ \varepsilon}(t,x))\bigg\} \\
\rightarrow \mathbb{E} & \bigg\{\bigg[\left\langle  \bar{M}(t);a \right\rangle \left\langle \bar{M}(t);b \right\rangle - \left\langle \bar{M}(s);a \right\rangle \left\langle \bar{M}(s);b \right\rangle \\
& - \int_{s}^{t} \left\langle \left[\Phi\left(\bar{u}(s,\xi)\right)\right]^{*}a ; \left[\Phi\left(\bar{u}(s,\xi)\right)\right]^{*}b \right\rangle \diff \xi\bigg]\varphi(\bar{u}(t,x))\bigg\}.
\end{aligned}
\end{equation}

Then $\bar{M}(t)$ is also a square integrable martingale adapted to the filtration  $\sigma\left\{\bar{u}(s), 0\leq s \leq t\right\}$ with quadratic variation equal $\int_{0}^{t} \Phi(\bar{u}(s,x))\left(\Phi(\bar{u}(s,x))\right)^{*} d s$. 

Substitute in the representation theorem (e.g., see Theorem 8.2 in \cite{dPZ}), $M_{t}:=\bar{M}(t)$, $[M_{t}]:=\int_{0}^{t} \Phi(\bar{u}(s,x))\times\left(\Phi(\bar{u}(s,x))\right)^{*} d s$ and $\Phi(s):=\Phi(\bar{u}(s,x))$.

Then there exists a process $\tilde{M}(t) = \int_{0}^{t}\Phi(\bar{u}(s,x))d W(s)$, such that $\tilde{M}(t)=\bar{M}(t)$, $\mathbb{\bar{P}}-\text{a.s.}$, and	
\begin{equation}\nonumber
\begin{aligned}
& \bar{u}(t,x) - u_{0}(x) + \int_{0}^{t}\bigg[\bar{u}(t,x)\bar{u}_{x}(t,x) + \bar{u}_{3x}(t,x) + 3\bar{u}_{x}(t,x) \bar{u}_{2x}(t,x) + \bar{u}(t,x) \bar{u}_{3x}(t,x) \bigg] \diff s \\
&= \int_{0}^{t}\Phi(\bar{u}(s,x))\diff W(s) .
\end{aligned}
\end{equation}
This implies 
\begin{equation}\nonumber
\begin{aligned}
& \bar{u}(t,x) = u_{0}(x) - \int_{0}^{t}\bigg[\bar{u}(t,x)\bar{u}_{x}(t,x) + \bar{u}_{3x}(t,x) + 3\bar{u}_{x}(t,x) \bar{u}_{2x}(t,x) + \bar{u}(t,x) \bar{u}_{3x}(t,x) \bigg]  \diff s \\
& + \int_{0}^{t}\Phi(\bar{u}(s,x))\diff W(s) , \\
\end{aligned}
\end{equation}
so $\bar{u}(t,x)$ is a solution to (\ref{Apr}), what finishes the proof of Theorem \ref{P4.1} .
\end{proof}

\section{Proofs of Lemmas \ref{szac4.1} and \ref{PropCias}}\label{sec3}

\begin{proof}[Proof of Lemma \ref{szac4.1}]
Let $p: \mathbb{R} \rightarrow \mathbb{R}$, be a smooth function fulfilling conditions
\begin{itemize}
\item[(i)] $p$ is increasing in $\mathbb{R}$;
\item[(ii)] $\forall_{x\in{\mathbb{R}}}$~ $~p > \delta_{0} > 0$;
\item[(iii)] $\forall_{n\in\mathbb{N}}~ \left|\frac{\partial ^{n}}{\partial x^{n}} p(x)\right|<\delta_{n}$;
\item[(iv)] $\left(\lambda - 2\right) \delta_{2} \geq \delta_{3}$ .
\end{itemize}
Let $F(u^{\varepsilon}) := \int_{X}p(x)(u^{\varepsilon}(x))^{2}\diff x$. Applying the It\^o formula for $F(u^{\varepsilon})$, we obtain
\begin{equation}
\begin{aligned}
\diff F(u^{\varepsilon}(t,x)) = & \left\langle F'(u^{\varepsilon}(t,x));\Phi(u^{\varepsilon}(t,x)) \right\rangle \diff W(t) - \left\langle F'(u^{\varepsilon}(t,x));\varepsilon u^{\varepsilon}_{4x}(t,x) + u^{\varepsilon}_{3x}(t,x) \right.\\
&\left. +  u^{\varepsilon}(t,x)u^{\varepsilon}_{x}(t,x) + 3u_{x}^{\varepsilon}(t,x)u^{\varepsilon}_{2x}(t,x) + u^{\varepsilon}(t,x)u^{\varepsilon}_{3x}(t,x)\right\rangle \diff t \\
&+ \frac{1}{2}tr\left\{F''(u^{\varepsilon}(t,x))\Phi(u^{\varepsilon}(t,x))\left[\Phi(u^{\varepsilon}(t,x))\right]^{*}\right\}\diff t ,
\end{aligned}
\end{equation}
where $$\left\langle F'(u^{\varepsilon}(t,x));v(t,x) \right\rangle  \!= \! 2\!\!\int_{X} \!p(x)u^{\varepsilon}(t,x)v(t,x)\diff x \quad \mbox{and}  \quad
F''(u^{\varepsilon}(t,x))v(t,x)   \!=  \!2p(x)v(t,x). $$

We use the following estimates from \cite[p.242]{Deb}. 
There exist $C_{1}, C_{2}, C_{3}$, such that
\begin{equation}\nonumber
\begin{aligned}
\int_{\mathbb{R}} p(x)u^{\varepsilon}(t,x)u^{\varepsilon}_{4x}(t,x) \diff x \geq & \frac{1}{2}\int_{\mathbb{R}}p(x)\left[u^{\varepsilon}_{2x}(t,x)\right]^{2}\diff x - C_{1}\left|u^{\varepsilon}(t,x)\right|^{2}_{L^{2}(\mathbb{R})} \\
&- C_{2}\int_{\mathbb{R}}p'(x)\left[u_{x}(t,x)\right]^{2}\diff x ; \\
\int_{\mathbb{R}} p(x)u^{\varepsilon}(t,x)u^{\varepsilon}_{3x}(t,x) \diff x \geq & \frac{3}{2}\int_{\mathbb{R}}p'(x)\left[u^{\varepsilon}_{x}(t,x)\right]^{2}\diff x - \frac{1}{2}\int_{\mathbb{R}}p'''(x)\left[u(t,x)\right]^{2}\diff x ; \\
\int_{\mathbb{R}} p(x)\left[u^{\varepsilon}(t,x)\right]^{2}u^{\varepsilon}_{x}(t,x) \diff x = & -\frac{1}{3} \int_{\mathbb{R}}p'(x)\left[u^{\varepsilon}(t,x)\right]^{3} \diff x \\
 \geq & -C_{3}\left(1+\left|u^{\varepsilon}(t,x)\right|_{L^{2}(\mathbb{R})}^{6}\right) - \frac{1}{2}\int_{\mathbb{R}}p'(x)\left[u_{x}(t,x)\right]^{2}\diff x . 
\end{aligned}
\end{equation}
Similarly as above
, one has
\begin{equation} 
\begin{aligned}
\int_{\mathbb{R}} & p(x)\left[3u_{x}^{\varepsilon}(t,x)u^{\varepsilon}_{2x}(t,x) + u^{\varepsilon}(t,x)u^{\varepsilon}_{3x}(t,x)\right]
=  \int_{\mathbb{R}}p''(x)u_{x}^{\varepsilon}(t,x)\left[u^{\varepsilon}(t,x)\right]^{2} \diff x  \\
& + \int_{\mathbb{R}}p'(x)\left[u_{x}^{\varepsilon}(t,x)\right]^{2}u^{\varepsilon}(t,x) \diff x + \int_{\mathbb{R}}p(x)u^{\varepsilon}(t,x)u_{x}^{\varepsilon}(t,x)u_{2x}^{\varepsilon}(t,x)  \diff x \\
\geq & - \frac{1}{3} \int_{\mathbb{R}}p'''(x)\left[u^{\varepsilon}(t,x)\right]^{3} \diff x - \int_{\mathbb{R}}p'(x)\left|u^{\varepsilon}(t,x)\right|\left[u_{x}^{\varepsilon}(t,x)\right]^{2} \diff x \\
& - \int_{\mathbb{R}}p(x)\left|u^{\varepsilon}(t,x)\right|u_{x}^{\varepsilon}(t,x)u_{2x}^{\varepsilon}(t,x)  \diff x \\
\geq & -C_{4}\left(1+\left|u^{\varepsilon}(t,x)\right|_{L^{2}(\mathbb{R})}^{6}\right) - \frac{1}{2}\int_{\mathbb{R}}p'''(x)\left[u_{x}(t,x)\right]^{2}\diff x  - \lambda \int_{\mathbb{R}}p'(x)\left[u_{x}^{\varepsilon}(t,x)\right]^{2} \diff x \\
& - \lambda \int_{\mathbb{R}}p(x)u_{x}^{\varepsilon}(t,x)u_{2x}^{\varepsilon}(t,x)  \diff x \\ 
= & -C_{4}\left(1+\left|u^{\varepsilon}(t,x)\right|_{L^{2}(\mathbb{R})}^{6}\right) - \frac{1}{2}\int_{\mathbb{R}}p'''(x)\left[u_{x}(t,x)\right]^{2}\diff x  - \lambda \int_{\mathbb{R}}p'(x)\left[u_{x}^{\varepsilon}(t,x)\right]^{2} \diff x \\
& + \frac{1}{2} \lambda \int_{\mathbb{R}}p'(x)\left(u_{x}^{\varepsilon}(t,x)\right)^{2} \diff x.
\end{aligned}
\end{equation}

In consequence we have
\begin{equation}\label{DebL2}
\begin{aligned}
&\left\langle F'(u^{\varepsilon}(t,x));\varepsilon u^{\varepsilon}_{4x}(t,x) + u^{\varepsilon}_{3x}(t,x) +  u^{\varepsilon}(t,x)u^{\varepsilon}_{x}(t,x) + 3u_{x}^{\varepsilon}(t,x)u^{\varepsilon}_{2x}(t,x) + u^{\varepsilon}(t,x)u^{\varepsilon}_{3x}(t,x)\right\rangle \\
\geq & \varepsilon\int_{\mathbb{R}}p(x)\left[u^{\varepsilon}_{2x}(t,x)\right]^{2}\diff x - 2 \varepsilon C_{1}\int_{\mathbb{R}}\left[u^{\varepsilon}(t,x)\right]^{2} \diff x - 2 \varepsilon C_{2}\int_{\mathbb{R}}p'(x)\left[u_{x}(t,x)\right]^{2}\diff x \\
& + 3\int_{\mathbb{R}}p'(x)\left[u^{\varepsilon}_{x}(t,x)\right]^{2}\diff x - \int_{\mathbb{R}}p'''(x)\left[u(t,x)\right]^{2}\diff x -2 C_{3}\left(1+\left|u^{\varepsilon}(t,x)\right|_{L^{2}(\mathbb{R})}^{6}\right) \\
& - \int_{\mathbb{R}}p'(x)\left[u_{x}(t,x)\right]^{2}\diff x -2 C_{4}\left(1+\left|u^{\varepsilon}(t,x)\right|_{L^{2}(\mathbb{R})}^{6}\right) - \int_{\mathbb{R}}p'''(x)\left[u_{x}(t,x)\right]^{2}\diff x \\
&  - 2 \lambda \int_{\mathbb{R}}p'(x)\left[u_{x}^{\varepsilon}(t,x)\right]^{2} \diff x + \lambda \int_{\mathbb{R}}p'(x)\left[u_{x}^{\varepsilon}(t,x)\right]^{2} \diff x 
\end{aligned}
\end{equation}
\begin{equation} \nonumber
\begin{aligned}
= & \varepsilon\int_{\mathbb{R}}p(x)\left[u^{\varepsilon}_{2x}(t,x)\right]^{2}\diff x + \int_{\mathbb{R}} \left[-2\varepsilon C_{2} p'(x) + 3p'(x) - p'(x) - p'''(x) -\lambda p'(x)\right] \left[u_{x}^{\varepsilon}(t,x)\right]^{2} \diff x \\
& + \int_{\mathbb{R}} \left[-2 \varepsilon C_{1} - p'''(x) \right] \left[u^{\varepsilon}(t,x)\right]^{2} \diff x - C_{5} \left(1+\left|u^{\varepsilon}(t,x)\right|_{L^{2}(\mathbb{R})}^{6}\right) \\
= & \varepsilon\int_{\mathbb{R}}p(x)\left[u^{\varepsilon}_{2x}(t,x)\right]^{2}\diff x + \int_{\mathbb{R}} \left[\left(-2\varepsilon C_{2} - \lambda + 2\right) p'(x) - p'''(x) \right] \left[u_{x}^{\varepsilon}(t,x)\right]^{2} \diff x \\
& + \int_{\mathbb{R}} \left[-2 \varepsilon C_{1} - p'''(x) \right] \left[u^{\varepsilon}(t,x)\right]^{2} \diff x - C_{5} \left(1+\left|u^{\varepsilon}(t,x)\right|_{L^{2}(\mathbb{R})}^{6}\right) \\
\geq & \varepsilon\int_{\mathbb{R}}p(x)\left[u^{\varepsilon}_{2x}(t,x)\right]^{2}\diff x + \int_{\mathbb{R}} \left[2\delta_{1}\varepsilon C_{2} +\delta_{1}\left( \lambda - 2\right) - \delta_{2} \right] \left[u_{x}^{\varepsilon}(t,x)\right]^{2} \diff x \\
& + \int_{\mathbb{R}} \left[-2 \varepsilon C_{1} - p'''(x) \right] \left[u^{\varepsilon}(t,x)\right]^{2} \diff x - C_{5} \left(1+\left|u^{\varepsilon}(t,x)\right|_{L^{2}(\mathbb{R})}^{6}\right) \\
\geq & \varepsilon\delta\int_{\mathbb{R}}\left[u^{\varepsilon}_{2x}(t,x)\right]^{2}\diff x + 2\delta_{1}\varepsilon C_{2} \int_{\mathbb{R}} \left[u_{x}^{\varepsilon}(t,x)\right]^{2} \diff x 
 - \left[2 \varepsilon C_{1} + \delta_{3} \right] \int_{\mathbb{R}} \left[u^{\varepsilon}(t,x)\right]^{2} \diff x - C_{5} \left(1+\lambda^{6}\right).
\end{aligned}
\end{equation}

Let $\left\{e_{1}\right\}_{i\in\mathbb{N}}$ be an orthonormal basis in $L^{2}(\mathbb{R})$. Then there exists a constant $C_{4}>0$, such that
\begin{equation}\label{DebL3}
\begin{aligned}
\text{Tr}\left(F''(u)\Phi(u)\left[\Phi(u)\right]^{*}\right) =& 2\sum_{i\in\mathbb{N}} \int_{\mathbb{R}}p(x)\left|\Phi\left(u^{\varepsilon}(t,x)\right)e_{i}(x)\right|^{2} \diff x \leq C_{4}\left|\Phi\left(u^{\varepsilon}(t,x)\right)\right|^{2}_{L_{0}^{2}\left(L^{2}(X)\right)} \\
\leq & C_{6}\left(\kappa_{1}\left|u^{\varepsilon}(t,x)\right|_{L^{2}(X)}^{2}+\kappa_{2}\right)^{2}.
\end{aligned}
\end{equation}
Due to (\ref{DebL2}) and (\ref{DebL3}) we have
\begin{equation}\nonumber
\begin{aligned}
\mathbb{E}F(u^{\varepsilon}(t,x)) \leq & F\left(u^{\varepsilon}_{0}\right) - \varepsilon\delta\mathbb{E}\int_{0}^{t}\int_{\mathbb{R}}\left[u^{\varepsilon}_{2x}(t,x)\right]^{2}\diff x - 2\delta_{1}\varepsilon C_{2} \mathbb{E}\int_{0}^{t}\int_{\mathbb{R}} \left[u_{x}^{\varepsilon}(t,x)\right]^{2} \diff x \\
& + \left[2 \varepsilon C_{1} + \delta_{3} \right] \mathbb{E}\int_{0}^{t}\int_{\mathbb{R}} \left[u^{\varepsilon}(t,x)\right]^{2} \diff x + tC_{5} \left(1+\lambda^{6}\right) \\
& + C_{6}\mathbb{E}\int_{0}^{t}\left(\kappa_{1}\left|u^{\varepsilon}(t,x)\right|^{2}_{L^{2}(\mathbb{R})}+\kappa_{2}\right)^{2}\diff t, \\
\end{aligned}
\end{equation}
so,
\begin{equation}\nonumber
\begin{aligned}
&\mathbb{E}F(u^{\varepsilon}(t,x)) + \varepsilon\delta\mathbb{E}\int_{0}^{t}\int_{\mathbb{R}}\left[u^{\varepsilon}_{2x}(t,x)\right]^{2}\diff x \diff t + 2\delta_{1}\varepsilon C_{2} \mathbb{E}\int_{0}^{t}\int_{\mathbb{R}} \left[u_{x}^{\varepsilon}(t,x)\right]^{2} \diff x \diff t \\
\leq & F\left(u^{\varepsilon}_{0}\right) + \left[2 \varepsilon C_{1} + \delta_{3} \right] \mathbb{E}\int_{0}^{t}\int_{\mathbb{R}} \left[u^{\varepsilon}(t,x)\right]^{2} \diff x \diff t + C_{5} t\left(1+\lambda^{6}\right) \\
& + C_{6}\mathbb{E}\int_{0}^{t}\left(\kappa_{1}\left|u^{\varepsilon}(t,x)\right|^{2}_{L^{2}(\mathbb{R})}+\kappa_{2}\right)^{2}\diff t \\
= & F\left(u^{\varepsilon}_{0}\right) + \left[2 \varepsilon C_{1} + \delta_{3} \right] t\lambda^{2} + C_{5} t\left(1+\lambda^{6}\right) + C_{6}t\left(\kappa_{1}\lambda^{2}+\kappa_{2}\right)^{2} \\
\leq & F\left(u^{\varepsilon}_{0}\right) + \left[2 \varepsilon C_{1} + \delta_{3} \right] T\lambda^{2} + C_{5} T\left(1+\lambda^{6}\right) + C_{6}T\left(\kappa_{1}\lambda^{2}+\kappa_{2}\right)^{2} \leq \varepsilon C_{7} + C_{8}.
\end{aligned}
\end{equation}

Let $\varepsilon_{0}>0$ be fixed. Then for all $0<\varepsilon<\varepsilon_{0}$ one has
\begin{equation}\nonumber
\begin{aligned}
\varepsilon\, \mathbb{E} &\left( \left|u^{\varepsilon}(t,x)\right|^{2}_{L^{2}(0,T;H^{2}(X))} \right) =  \varepsilon \,\mathbb{E} \! \int_{0}^{T} \!\! \int_{\mathbb{R}} \! \left[u^{\varepsilon}(t,x)\right] ^{2} \diff x \diff t  + \varepsilon \mathbb{E}  \! \int_{0}^{T} \!\! \int_{\mathbb{R}} \! \left[u_{2x}^{\varepsilon}(t,x)\right] ^{2} \diff x \diff t  \\ &
\leq   \varepsilon\, T \lambda^{2} + \varepsilon\, \mathbb{E}  \! \int_{0}^{T}  \!\!\int_{\mathbb{R}}  \!\left[u_{2x}^{\varepsilon}(t,x)\right] ^{2} \diff x \diff t 
=  \varepsilon\, T \lambda^{2} + \varepsilon\, \mathbb{E}  \! \int_{0}^{T} \!\! \int_{\mathbb{R}} \!\frac{1}{\delta} \delta \left[u_{2x}^{\varepsilon}(t,x)\right] ^{2} \diff x \diff t \\ &
\leq \varepsilon\, T \lambda^{2} + \frac{1}{\delta} \varepsilon\, \delta\, \mathbb{E} \! \int_{0}^{T} \!\! \int_{\mathbb{R}} \! \left[u_{2x}^{\varepsilon}(t,x)\right] ^{2} \diff x 
\leq \varepsilon \,T \lambda^{2} + \frac{1}{\delta}\, \varepsilon\left(\varepsilon C_{7}(T) + C_{8}(T)\right)\\ &
 \leq \varepsilon_{0} T \lambda^{2} + \frac{\varepsilon_{0}^{2} C_{7}(T) + \varepsilon_{0}C_{8}(T)}{\delta},
\end{aligned}
\end{equation}
what proves (\ref{4.1a}). Moreover one has
\begin{equation}\nonumber
\begin{aligned}
\mathbb{E} & \left( \left|u^{\varepsilon}(t,x)\right|^{2}_{L^{2}(0,T;H^{1}(-k,k))} \right) =  \mathbb{E}  \int_{0}^{T} \int_{-k}^{k} \left[u^{\varepsilon}(t,x)\right] ^{2} \diff x \diff t  + \mathbb{E}  \int_{0}^{T} \int_{-k}^{k} \left[u_{x}^{\varepsilon}(t,x)\right] ^{2} \diff x \diff t  \\  &
\leq  \varepsilon\, T \lambda^{2} + \mathbb{E}  \int_{0}^{T} \int_{-k}^{k} \left[u_{x}^{\varepsilon}(t,x)\right] ^{2} \diff x 
\leq \varepsilon\, T \lambda^{2} + \mathbb{E}  \int_{0}^{T} \int_{\mathbb{R}} \left[u_{x}^{\varepsilon}(t,x)\right] ^{2} \diff x \\ &
\leq \varepsilon T \lambda^{2} + \frac{1}{2\delta_{1}\varepsilon C_{2}}  2\delta_{1}\varepsilon C_{2} \mathbb{E} \int_{0}^{T} \int_{\mathbb{R}} \left[u_{x}^{\varepsilon}(t,x)\right] ^{2} \diff x \\ & 
\leq \varepsilon T \lambda^{2} + \frac{1}{2\delta_{1}\varepsilon C_{2}} \left(\varepsilon C_{7}(T) + C_{8}(T)\right) \leq \varepsilon_{0} T \lambda^{2} + \frac{\varepsilon_{0} C_{7}(T) + C_{8}(T)}{2\delta_{1}\varepsilon_{0} C_{2}},
\end{aligned}
\end{equation}
what proves inequality (\ref{4.1c}).
\end{proof}

\begin{proof}[Proof of Lemma \ref{PropCias}]
Let $k\in \mathbb{N}$ be arbitrary fixed and let $0<\varepsilon<\varepsilon_{0}$. Then
\begin{equation}\label{dec}
\begin{aligned}
u^{\varepsilon}(t,x) = & u_{0}^{\varepsilon}(x) -  \int_{0}^{t} \bigg[\varepsilon u^{\varepsilon}_{4x}(t,x) + u^{\varepsilon}_{3x}(t,x) + u^{\varepsilon}(t,x)u^{\varepsilon}_{x}(t,x)  \\
& + 3u^{\varepsilon}_{x}(t,x)u^{\varepsilon}_{2x}(t,x) + u^{\varepsilon}(t,x)u^{\varepsilon}_{3x}(t,x) \bigg] \diff s + \int_{0}^{t} \left( \Phi(u^{\varepsilon}(s,x)) \right) \diff W(s) .
\end{aligned}
\end{equation}

Denote  
\begin{equation}\nonumber
\begin{aligned}
J_{1}:= &  u_{0}^{\varepsilon}(x) ; \quad 
J_{2}:= - \varepsilon\int_{0}^{t} u^{\varepsilon}_{4x}(t,x)  \diff s ; \quad
J_{3}:= - \int_{0}^{t} u^{\varepsilon}(s,x) u^{\varepsilon}_{x}(s,x)  \diff s ; \\
J_{4}:= & - \int_{0}^{t} u^{\varepsilon}_{3x}(t,x) \diff s ; \quad
J_{5}:=  - \left(3\int_{0}^{t}\!\!u^{\varepsilon}_{x}(s,x) u^{\varepsilon}_{2x}(t,x) \diff s + \int_{0}^{t}\!\! u^{\varepsilon}(t,x) u^{\varepsilon}_{3x}(t,x) \diff s\right); \\
J_{6}:= & 	\int_{0}^{t} \left( \Phi(u^{\varepsilon}(s,x)) \right) \diff W(s) .
\end{aligned}
\end{equation}
There exists a constant $C_{1}>0$, that ~$\mathbb{E} \left|J_{1}\right|^{2}_{W^{1,2}(0,T,H^{-2}(-k,k))} = C_{1}$.\\
There exists a constant $C_{2} > 0$, such that
\begin{equation}\nonumber
\left| - \varepsilon u^{\varepsilon}_{4x}(t,x)  \right|_{H^{-2}(-k,k)} =  \varepsilon \left| u^{\varepsilon}_{4x}(t,x)  \right|_{H^{-2}(-k,k)}
 \leq  C_{2}\varepsilon\left| u^{\varepsilon}(s,x) \right|_{H^{2}(-k,k)}.
\end{equation}
Therefore, due to Lemma \ref{szac4.1}, 
we can write
\begin{equation}\nonumber
\begin{aligned}
\mathbb{E}& \left| - \varepsilon u^{\varepsilon}_{4x}(t,x) \right|^{2}_{L^{2}(0,T;H^{-2}(-k,k))} =  \mathbb{E} \int_{0}^{T} \left| - \varepsilon u^{\varepsilon}_{4x}(t,x) \right|^{2}_{H^{-2}(-k,k)} \diff s \\ &
\leq  C_{2}^{2}\varepsilon^{2} \mathbb{E} \int_{0}^{T} \left| u^{\varepsilon}(s,x) \right|^{2}_{H^{2}(-k,k)} \diff s \leq  C_{3}(k), \mbox{~where~} C_{3}(k)>0.
\end{aligned}
\end{equation}
So, there exists a constant $C_{4}(k)>0$, such that
\begin{equation}\nonumber
\begin{aligned}
\mathbb{E} \left|J_{2}\right|^{2}_{W^{1,2}(0,T,H^{-2}(-k,k))} \leq C_{4}(k) .
\end{aligned}
\end{equation}

Now, we use the result from \cite[p.243]{Deb}.  
There exists a constant $C_{5}(k)>0$,  that the following inequality holds
\begin{equation}
\left|u^{\varepsilon}(s,x)u^{\varepsilon}_{x}(s,x)\right|_{H^{-1}(-k,k)} \leq C_{5}(k)\left|u^{\varepsilon}(s,x)\right|^{\frac{3}{2}}_{L^{2}(-k,k)}\left|u^{\varepsilon}(s,x)\right|^{\frac{1}{2}}_{H^{1}(-k,k)}.
\end{equation}
This estimate implies the existence of a constant  $C_{8}(k)>0$, such that
\begin{equation}\nonumber
\begin{aligned}
& \left| - u^{\varepsilon}(s,x)u^{\varepsilon}_{x}(s,x) \right|_{H^{-2}(-k,k)} = \left| u^{\varepsilon}(s,x)u^{\varepsilon}_{x}(s,x)  \right|_{H^{-2}(-k,k)} \\ &
\leq C_{6} \left| u^{\varepsilon}(s,x)u^{\varepsilon}_{x}(s,x) \right|_{H^{-1}(-k,k)}
\leq C_{7}(k) \left|u^{\varepsilon}(s,x)\right|^{\frac{3}{2}}_{L^{2}(-k,k)}\left|u^{\varepsilon}(s,x)\right|^{\frac{1}{2}}_{H^{1}(-k,k)}   \\ &
\leq C_{7}(k) \left|u^{\varepsilon}(s,x)\right|_{L^{2}(-k,k)}\left|u^{\varepsilon}(s,x)\right|^{\frac{1}{2}}_{L^{2}(-k,k)}\left|u^{\varepsilon}(s,x)\right|^{\frac{1}{2}}_{H^{1}(-k,k)} \\ &
\leq C_{7}(k) \left[\left(2k\lambda^{2}\right)^{\frac{1}{2}}\right]\left|u^{\varepsilon}(s,x)\right|^{\frac{1}{2}}_{H^{1}(-k,k)} 
\leq C_{8}(k) \lambda\left|u^{\varepsilon}(s,x)\right|_{H^{1}(-k,k)}  .
\end{aligned}
\end{equation}

Due to Lemma \ref{szac4.1} there exists a constant  $C_{9}(k)>0$, that we can write 
\begin{equation}\nonumber
\begin{aligned}
\mathbb{E}& \left|- u^{\varepsilon}(s,x)u^{\varepsilon}_{x}(s,x) \right|^{2}_{L^{2}(0,T;H^{-2}(-k,k))} =  \mathbb{E} \int_{0}^{T} \left| - u^{\varepsilon}(s,x)u^{\varepsilon}_{x}(s,x) \right|^{2}_{H^{-2}(-k,k)} \diff s \\ & 
\leq C_{8}^{2}(k) \lambda^{2} \mathbb{E} \int_{0}^{T} \left|u^{\varepsilon}(s,x)\right|^{2}_{H^{1}(-k,k)} \diff s 
=  C_{8}^{2}(k) \lambda^{2} \mathbb{E} \left|u^{\varepsilon}(s,x)\right|^{2}_{L^{2}(0,T;H^{1}(-k,k))} \leq  C_{9}(k) \lambda^{2} .
\end{aligned}
\end{equation}

Then, there exists a constant  $C_{10}(k)>0$, such that
$$\mathbb{E} \left|J_{3}\right|^{2}_{W^{1,2}(0,T,H^{-2}(-k,k))} \leq C_{10}(k).$$

We have 
\begin{equation}\nonumber
\begin{aligned}
\left| - u^{\varepsilon}_{3x}(t,x)\right|_{H^{-2}(-k,k)} = & \left| u^{\varepsilon}_{3x}(t,x)  \right|_{H^{-2}(-k,k)}
\leq  C_{11}\left| u^{\varepsilon}(s,x) \right|_{H^{1}(-k,k)} \\
\leq & C_{12}\left| u^{\varepsilon}(s,x) \right|_{H^{2}(-k,k)}, \mbox{~where~} C_{12}>0.
\end{aligned}
\end{equation}
Lemma \ref{szac4.1} implies the existence of a constant $C_{13}>0$, such that
\begin{equation}\nonumber
\begin{aligned}
\mathbb{E} & \left|- u^{\varepsilon}_{3x}(t,x) \right|^{2}_{L^{2}(0,T;H^{-2}(-k,k))} = \mathbb{E} \! \int_{0}^{T}\!\!\! \left| - u^{\varepsilon}_{3x}(t,x) \right|^{2}_{H^{-2}(-k,k)} \diff s  \leq  
C^{2}_{12} \,\mathbb{E} \!\int_{0}^{T}\!\!\! \left|u^{\varepsilon}(s,x)\right|^{2}_{H^{2}(-k,k)} \diff s \\ &
= C^{2}_{12} \,\mathbb{E} \left|u^{\varepsilon}(s,x)\right|^{2}_{L^{2}(0,T;H^{2}(-k,k))} \leq 
C^{2}_{12} \,\mathbb{E} \left|u^{\varepsilon}(s,x)\right|^{2}_{L^{2}(0,T;H^{2}(\mathbb{R}))} \leq  C_{13} .
\end{aligned}
\end{equation}
So, there exists a constant  $C_{14}>0$, such that
$~\mathbb{E} \left|J_{4}\right|^{2}_{W^{1,2}(0,T,H^{-2}(-k,k))} \leq C_{14}$.

There exist constants $C_{15}, C_{16}(k) > 0$, such that
\begin{equation}\nonumber
\begin{aligned}
\left| \right. - &  \left.\left(3 u^{\varepsilon}_{x}(s,x) u^{\varepsilon}_{2x}(t,x) + u^{\varepsilon}(t,x) u^{\varepsilon}_{3x}(t,x) \right)\right|_{H^{-2}(-k,k)}\\\leq & C_{15}\left| u^{\varepsilon}(s,x)u^{\varepsilon}_{x}(s,x) \right|_{L^{2}(-k,k)} 
\leq  C_{16}(k) \lambda^{2}\left|u^{\varepsilon}(s,x)\right|_{H^{1}(-k,k)}.
\end{aligned}
\end{equation}
Due to Lemma \ref{szac4.1} there exists a constant  $C_{17}(k)>0$, such that
\begin{equation}\nonumber
\begin{aligned}
\mathbb{E} & \left| - \left(3 u^{\varepsilon}_{x}(s,x) u^{\varepsilon}_{2x}(t,x) + u^{\varepsilon}(t,x) u^{\varepsilon}_{3x}(t,x) \right) \right|^{2}_{L^{2}(0,T;H^{-3}(-k,k))} \\ &
 =  \mathbb{E} \int_{0}^{T} \left| - \left(3 u^{\varepsilon}_{x}(s,x) u^{\varepsilon}_{2x}(t,x) + u^{\varepsilon}(t,x) u^{\varepsilon}_{3x}(t,x) \right) \right|^{2}_{H^{-3}(-k,k)} \diff s \\ &
\leq C_{16}^{2}(k) \lambda^{4} \mathbb{E} \! \int_{0}^{T}\!\!\! \left|u^{\varepsilon}(s,x)\right|^{2}_{H^{1}(-k,k)} \diff s 
=  C_{16}^{2}(k) \lambda^{4} \mathbb{E} \left|u^{\varepsilon}(s,x)\right|^{2}_{L^{2}(0,T;H^{1}(-k,k))}  \leq C_{17}(k) \lambda^{4} .
\end{aligned}
\end{equation}
So, there exists a constant  $C_{18}(k)>0$, such that 
$~\mathbb{E} \left|J_{5}\right|^{2}_{W^{1,2}(0,T,H^{-3}(-k,k))} \leq C_{18}(k)$.

Substitute in  \cite[Lemma 2.1]{Gat} $f(s) := \Phi(u(s,x))$, $K=H=L^{2}(\mathbb{R})$. Then 
$\mathscr{I}(f)(t) = \int_{0}^{t}\Phi(u(s,x)) \diff W(s)$ and for all $p\geq 1$ and $\alpha<\frac{1}{2}$ there exists a constant  $C_{22}(p,\alpha)>0$, such that
\begin{equation}\nonumber
\begin{aligned}
\mathbb{E}\left|\int_{0}^{t}\Phi(u^{m}(s,x)) \diff W(s)\right|^{2p}_{W^{\alpha ( p),2p}(0,T;L^{2}(\mathbb{R}))} \leq & C_{22}(2p,\alpha) \mathbb{E} \left( \int_{0}^{T} \left|\Phi(u^{m}(s,x))\right|^{2p}_{L_{2}^{0}(L^{2}(\mathbb{R}))} \diff s \right). 
\end{aligned}
\end{equation}
Then, due to condition (\ref{W1}), there exists a constant $C_{23}>0$, that
\begin{equation}\nonumber
\begin{aligned}
\mathbb{E}\left|\int_{0}^{t}\Phi(u^{m}(s,x)) \diff W(s)\right|^{2p}_{W^{\alpha,2p}(0,T;L^{2}(\mathbb{R}))} \leq & C_{23} (p,\alpha) .
\end{aligned}
\end{equation}
Substitution in the above inequality $p:=1$ yields
\begin{equation}\label{It\^o3}
\mathbb{E}\left|J_{6}\right|^{2}_{W^{\alpha,2}(0,T;L^{2}(\mathbb{R}))} = \mathbb{E}\left|\int_{0}^{t}\Phi(u(s,x)) \diff W(s)\right|^{2}_{W^{\alpha,2}(0,T;L^{2}(\mathbb{R}))} \leq C_{23}(2,\alpha) = C_{24}(\alpha). 
\end{equation}

Let $\beta\in\left(0,\frac{1}{2}\right)$ and $\alpha\in\left(\beta + \frac{1}{2}, \infty\right)$ be arbitrary fixed. Note, that the following inclusion relations hold
$$\hspace{2.5ex} W^{\alpha,2}(0,T;L^{2}(\mathbb{R})) \subset W^{\alpha,2}(0,T;H^{-2}([-k,k));$$
$$\mbox{and} \hspace{3ex} W^{1,2}(0,T,H^{-2}(-k,k)) \subset W^{\alpha,2}(0,T,H^{-2}(-k,k)).$$
Then, there exists a constant $C_{25}(\alpha) > 0$, such that
\begin{equation}\nonumber
\begin{aligned}
\mathbb{E} & \left|u^{m}(s,x)\right|_{W^{\alpha,2}(0,T,H^{-2}(-k,k))}^{2}  =  \mathbb{E}\left|\sum_{i=1}^{6} J_{i}\right|_{W^{\alpha,2}(0,T,H^{-2}(-k,k))}^{2} \leq \mathbb{E} \left( \sum_{i=1}^{6} \left|J_{i}\right|_{W^{\alpha,2}(0,T,H^{-2}(-k,k))} \right)^{2} \\
= & \mathbb{E} \left[ \sum_{i=1}^{6} \left|J_{i}\right|^{2}_{W^{\alpha,2}(0,T,H^{-2}(-k,k))} + 2\sum_{i=1}^{6} \sum_{j=i+1}^{6} \left|J_{i}\right|_{W^{\alpha,2}(0,T,H^{-2}(-k,k))}\left|J_{j}\right|_{W^{\alpha,2}(0,T,H^{-2}(-k,k))} \right]\\
\leq & \mathbb{E} \left[ \sum_{i=1}^{6} \left|J_{i}\right|^{2}_{W^{\alpha,2}(0,T,H^{-2}(-k,k))} + 2\sum_{i=1}^{6} \sum_{j=i+1}^{6} \left(\left|J_{i}\right|^{2}_{W^{\alpha,2}(0,T,H^{-2}(-k,k))} + \left|J_{j}\right|^{2}_{W^{\alpha,2}(0,T,H^{-2}(-k,k))}\right) \right]\\
= & \mathbb{E} \left[ 8 \sum_{i=1}^{6} \left|J_{i}\right|^{2}_{W^{\alpha,2}(0,T,H^{-2}(-k,k))}\right]
= 8 \sum_{i=1}^{6} \left[ \mathbb{E} \left|J_{i}\right|^{2}_{W^{\alpha,2}(0,T,H^{-2}(-k,k))} \right]
\leq C_{25}(\alpha) .
\end{aligned}
\end{equation}
Moreover
$$\hspace{7ex} W^{\alpha,2}(0,T,H^{-2}(-k,k)) \subset C^{\beta}(0,T;H^{-3}_{loc}(-k,k);$$
$$\mbox{and} \hspace{3ex} W^{\alpha,2}(0,T,H^{-2}(\mathbb{R})) \subset W^{\alpha,2}(0,T,H^{-2}(-k,k)).$$

So, there exist constants $C_{27}(k), C_{28}(k, \alpha) >0$, such that
\begin{equation}\label{Cszac}
\begin{aligned}
& \mathbb{E}\left|u^{\varepsilon}(s,x)\right|_{C^{\beta}(0,T;H^{-3}(-k,k)}^{2} \leq  C_{26} \mathbb{E}\left|u^{\varepsilon}(s,x)\right|_{W^{\alpha,2}(0,T,H^{-3}(-k,k))}^{2} \leq C_{27}(k,\alpha) \\
& \mathbb{E}\left|u^{\varepsilon}(s,x)\right|_{W^{\alpha,2}(0,T,H^{-2}(-k,k))} \leq  C_{28}(k, \alpha).
\end{aligned}
\end{equation}

Let $\eta>0$ be arbitrary fixed. Due to Lemma \ref{szac4.1} there exists a constant  $C_{30}(k)>0$, that
\begin{equation}\label{rwnszac}
\begin{aligned}
\mathbb{E}\left|u^{\varepsilon}(s,x)\right|^{2}_{L^{2}(0,T,H^{-1}(-k,k))} \leq & C_{29}(k)\mathbb{E}\left|u^{\varepsilon}(s,x)\right|^{2}_{L^{2}(0,T,H^{-1}(\mathbb{R}))} \tilde{C}_{2} = C_{30}(k).
\end{aligned}
\end{equation}

Substituting in \cite[Lemma 2.1]{Deb} $\alpha_{k}:=\eta^{-1}2^{k} \left( C_{30}(k) + C_{27}(k,\alpha) + C_{28}(k,\alpha) \right)$ and using Markov inequality \cite[p. 114]{Pap} for $$X := \left|u^{\varepsilon}(s,x)\right|^{2}_{L^{2}(0,T,H^{-1}(-k,k))} + \left|u^{\varepsilon}(s,x)\right|^{2}_{W^{\alpha,2}(0,T,H^{-2}(-k,k))} + \left|u^{\varepsilon}(s,x)\right|_{C^{\beta}(0,T;H^{-3}_{loc}(-k,k)}^{2}$$ and $ \varepsilon := \eta^{-1}2^{k} \left( C_{30}(k) + C_{27}(k,\alpha) + C_{28}(k,\alpha) \right)$, we obtain
\begin{equation}\nonumber
\begin{aligned}
&\mathbb{P} \Big(u^{\varepsilon} \in A\left(\left\{\alpha _{k} \right\} \right) \Big) = 
  1 - \mathbb{P} \Big( \left|u^{\varepsilon}(s,x)\right|^{2}_{L^{2}(0,T,H^{-1}(-k,k))} + \left|u^{\varepsilon}(s,x)\right|^{2}_{W^{\alpha,2}(0,T,H^{-2}(-k,k))}   \\ 
&  ~+ \left|u^{\varepsilon}(s,x)\right|_{C^{\beta}(0,T;H^{-3}_{loc}(-k,k))}^{2}  \geq \eta^{-1}2^{k} \left( C_{30}(k) + C_{27}(k,\alpha)  + C_{28}(k,\alpha) \right) \Big) \\
= &\hspace{2ex} 1 - \frac{C_{30}(k) + C_{27}(k,\alpha) + C_{28}(k,\alpha)}{\eta^{-1}2^{k} \left( C_{30}(k) + C_{27}(k,\alpha) + C_{28}(k,\alpha)\right)} 
=  1 - \frac{\eta}{2^{k}} > 1 - \eta .
\end{aligned}
\end{equation}

Let $K$ be the following mapping for  $\eta>0$:   $K\left( \eta \right) = A\left(\left\{a_{k}^{(\eta)}\right\}\right)$, where $\left\{a_{k}^{(\eta)}\right\}$ is an increasing sequence of positive numbers, which can, but does not have to, depend on  $\eta$. Note, that due to \cite[Lemma 2.1]{Deb}, the set $K(\eta)$ is compact for all  $\eta>0$. Moreover, $\mathbb{P}\left\{K\left( \eta \right)\right\} > 1-\eta$, then the family $\mathscr{L}\left(u^{\varepsilon}\right)$ is tight. 
\end{proof}

\section{Proof of Lemma \ref{parMart}} \label{sec4}
\begin{proof} 
Let $\left\{e_{i}\right\}_{i\in\mathbb{N}}$ be an orthonormal basis in space  $L^{2}(\mathbb{R})$. Denote by $P_{m}$, for all $m\in\mathbb{N}$, the orthogonal projection on $Sp(e_{0},...,e_{m})$. Consider finite dimensional approximation of the problem (\ref{par}) in the space  $P_{m}L^{2}(\mathbb{R})$ of the form
\begin{equation}\label{Galerkin}
\begin{cases}
\diff u^{m,\varepsilon}(t,x) + \left[\varepsilon \theta\left(\frac{\left|u^{m,\varepsilon}_{4x}(t,x)\right|^{2}}{m}\right) u^{m,\varepsilon}_{4x}(t,x) + \theta\left(\frac{\left|u^{m,\varepsilon}_{x}(t,x)\right|^{2}}{m} \right) u^{m,\varepsilon}(t,x)u^{m,\varepsilon}_{x}(t,x)\right. \\ \hspace{14ex}
+ \theta\left(\frac{\left|u^{m,\varepsilon}_{3x}(t,x)\right|^{2}}{m} \right)u^{m,\varepsilon}_{3x}(t,x) + 3 \theta\left(\frac{\left|u^{m,\varepsilon}_{x}(t,x)u^{m,\varepsilon}_{2x}(t,x)\right|^{2}}{m} \right)u^{m,\varepsilon}_{2x}(t,x) \\ \left. \hspace{14ex}
+ \theta\left(\frac{\left|u^{m,\varepsilon}_{3x}(t,x)\right|^{2}}{m} \right)u^{m,\varepsilon}(t,x)u^{m,\varepsilon}_{3x}(t,x)  \right] \diff t
=  P_{m}\Phi\left(u^{m,\varepsilon}(t,x)\right)\diff W^{m}(t) \\
u^{m,\varepsilon}_{0}(x) = P_{m}u^{\varepsilon}(0,x) ,
\end{cases}
\end{equation}
where $\theta\in C^{\infty}(\mathbb{R})$ fulfils conditions
\begin{equation}
\begin{cases}
\theta(\xi) = 1, \quad &\textrm{when} \quad \xi\in [0,1] \\
\theta(\xi) \in [0,1], \quad &\textrm{when} \quad \xi\in (1,2) \\
\theta(\xi) = 0, \quad &\textrm{when} \quad \xi\in \left.[2,\infty)\right. .
\end{cases}
\end{equation}

Let $m\in\mathbb{N}$ be arbitrary fixed and 
\begin{equation}\nonumber
\begin{aligned}
b(u(t,x)) := & \theta\left(\frac{\left|u^{m,\varepsilon}_{x}(t,x)\right|^{2}}{m} \right) u^{m,\varepsilon}(t,x)u^{m,\varepsilon}_{x}(t,x) + \theta\left(\frac{\left|u^{m,\varepsilon}_{3x}(t,x)\right|^{2}}{m} \right) u^{m,\varepsilon}(t,x)u^{m,\varepsilon}_{3x}(t,x)   \\
&  +  3 \theta\left(\frac{\left|u^{m,\varepsilon}_{x}(t,x)u^{m,\varepsilon}_{2x}(t,x)\right|^{2}}{m} \right)u^{m,\varepsilon}_{x}(t,x)u^{m,\varepsilon}_{2x}(t,x)  , \\
\sigma (u(t,x)) := & \Phi(u^{m,\varepsilon}(t,x)).
\end{aligned}
\end{equation} 
Then
\begin{equation}\nonumber
\begin{aligned}
\left| b(u(t,x)) \right|_{L^{2}(\mathbb{R})} \leq & \left| \theta\left(\frac{\left|u^{m,\varepsilon}_{x}(t,x)\right|^{2}}{m} \right) u^{m,\varepsilon}(t,x)u^{m,\varepsilon}_{x}(t,x) \right|_{L^{2}(\mathbb{R})} \\
& + \left| \theta\left(\frac{\left|u^{m,\varepsilon}_{3x}(t,x)\right|^{2}}{m} \right) u^{m,\varepsilon}(t,x)u^{m,\varepsilon}_{3x}(t,x) \right|_{L^{2}(\mathbb{R})} \\
& + 3 \left| \theta\left(\frac{\left|u^{m,\varepsilon}_{x}(t,x)u^{m,\varepsilon}_{2x}(t,x)\right|_{L^{2}(\mathbb{R})}^{2}}{m} \right)u^{m,\varepsilon}_{x}(t,x)u^{m,\varepsilon}_{2x}(t,x) \right|_{L^{2}(\mathbb{R})} \\
= : & J_{1} + J_{2} + 3 J_{3}.
\end{aligned}
\end{equation}
Note, that
\begin{equation}\nonumber
J_{1} = 
\begin{cases}
0, \quad \mbox{when} \quad  \left|u^{m,\varepsilon}_{x}(t,x)\right| \geq \sqrt{2m} \\
\lambda  \left| u^{m,\varepsilon}(t,x)u^{m,\varepsilon}_{x}(t,x) \right|_{L^{2}(\mathbb{R})}, \quad \mbox{when} \quad \left|u^{m,\varepsilon}_{x}(t,x)\right| \leq \sqrt{2m} 
\end{cases}
\end{equation}
where $\lambda \in [0,1]$, therefore
\begin{equation}\nonumber
\begin{aligned}
J_{1} \leq \left| u^{m,\varepsilon}(t,x)u^{m,\varepsilon}_{x}(t,x) \right|_{L^{2}(\mathbb{R})} \leq \sqrt{2m} \left| u^{m,\varepsilon}(t,x) \right|_{L^{2}(\mathbb{R})} .
\end{aligned}
\end{equation}
Analogously, 
\begin{equation}\nonumber
\begin{aligned}
J_{2} \leq \left| u^{m,\varepsilon}(t,x)u^{m,\varepsilon}_{3x}(t,x) \right|_{L^{2}(\mathbb{R})} \leq \sqrt{2m} \left| u^{m,\varepsilon}(t,x) \right|_{L^{2}(\mathbb{R})} .
\end{aligned}
\end{equation}
Moreover
\begin{equation}\nonumber
J_{3} = 
\begin{cases}
0, \quad \mbox{when} \quad  \left|u^{m,\varepsilon}_{x}(t,x)u^{m,\varepsilon}_{2x}(t,x)\right|_{L^{2}(\mathbb{R})}^{2} \geq \sqrt{2m} \\
\lambda  \left|u^{m,\varepsilon}_{x}(t,x)u^{m,\varepsilon}_{2x}(t,x)\right|^{2}, \quad \mbox{when} \quad \left|u^{m,\varepsilon}_{x}(t,x)u^{m,\varepsilon}_{2x}(t,x)\right|_{L^{2}(\mathbb{R})}^{2} \leq \sqrt{2m} 
\end{cases}
\end{equation}
where $\lambda \in [0,1]$, so
\begin{equation}\nonumber
\begin{aligned}
3J_{3} \leq 3 \left| u^{m,\varepsilon}(t,x)u^{m,\varepsilon}_{x}(t,x) \right|_{L^{2}(\mathbb{R})} \leq 3 \sqrt{2m} .
\end{aligned}
\end{equation}
Finally, 
\begin{equation}\nonumber
\begin{aligned}
& \left| b( u^{m,\varepsilon}(t,x)) \right|_{L^{2}(\mathbb{R})} \leq 2\sqrt{2m} \left| u^{m,\varepsilon}(t,x) \right|_{L^{2}(\mathbb{R})} + 3 \sqrt{2m} .
\end{aligned}
\end{equation}
Additionally, due to the condition (\ref{W1}), there exist constants $\kappa_{1}, \kappa_{2} > 0$, such that
\begin{equation}
\left\|\Phi(u^{m,\varepsilon}(t,x))\right\|_{L_{0}^{2}(L^{2}(\mathbb{R}))} \leq \kappa_{1} \left|u^{m,\varepsilon}(t,x)\right|_{L^{2}(\mathbb{R})} + \kappa_{2} ,
\end{equation}
then 
\begin{equation}\nonumber
\begin{aligned}
& \left| b(u^{m,\varepsilon}(t,x)) \right|_{L^{2}(\mathbb{R})} + \left\|\sigma(u^{m,\varepsilon}(t,x))\right\|_{L_{0}^{2}(L^{2}(\mathbb{R}))} \\
\leq & 2\sqrt{2m} \left| u^{m,\varepsilon}(t,x) \right|_{L^{2}(\mathbb{R})} + 3 \sqrt{2m} + \kappa_{1} \left|u^{m,\varepsilon}(t,x)\right|_{L^{2}(\mathbb{R})}+ \kappa_{2} \\
= & \left( 2\sqrt{2m} + \kappa_{1} \right) \left| u^{m,\varepsilon}(t,x) \right|_{L^{2}(\mathbb{R})} + 3 \sqrt{2m} + \kappa_{2} \\
\leq & \left( 3\sqrt{2m} + \max \left\{ \kappa_{1}, \kappa_{2} \right\} \right) \left| u^{m,\varepsilon}(t,x) \right|_{L^{2}(\mathbb{R})} +  3\sqrt{2m} + \max \left\{ \kappa_{1}, \kappa_{2} \right\} \\
= & \left( 3\sqrt{2m} + \max \left\{ \kappa_{1}, \kappa_{2} \right\} \right) \left( \left| u^{m,\varepsilon}(t,x) \right|_{L^{2}(\mathbb{R})} + 1\right).
\end{aligned}
\end{equation}

Therefore, from \cite[Prop.~3.6 and 4.6]{Kar}, when $b(u(t,x))$ and $\sigma(u(t,x))$ are as above, for all $m\in\mathbb{N}$, there exists a martingale solution to (\ref{Galerkin}). Moreover, applying the same methods as in section \ref{sec3} one can show that
for all $m$ the following inequalities hold
\begin{eqnarray}
\exists_{C_{1}(\varepsilon)>0}\mathbb{E}\left( \left|u^{m,\varepsilon}(t,x)\right|^{2}_{L^{2}(0,T;H^{2}(\mathbb{R}))} \right)  & \leq \tilde{C}_{1}(\varepsilon),  \\ 
\forall_{k\in X_{k}}\exists_{C_{2}(k,\varepsilon)>0} \mathbb{E}\left( \left|u^{m,\varepsilon}(t,x)\right|^{2}_{L^{2}(0,T;H^{1}(-k,k))} \right)  & \leq \tilde{C}_{2}(k,\varepsilon);
\end{eqnarray}
and the family of distributions $\mathscr{L}(u^{m,\varepsilon})$ is tight in $L^{2}(0,T;L^{2}_{loc})\cap C(0,T;H^{-3}_{loc}(\mathbb{R}))$.
Then application of the same methods, as used already on pages \pageref{par}--\pageref{sec3}, leads to the  proof of the existence of martingale solution to  (\ref{par}).
\end{proof}


\end{document}